\newcommand{\nn}{\mathbb{N}}
\newcommand{\rr}{\mathbb{R}}
\newcommand{\cH}{\mathcal{H}}
\newcommand{\cM}{\mathcal{M}}
\newtheorem{thm}{Theorem}[section]
\newtheorem{lem}[thm]{Lemma}
\newtheorem{obs}[thm]{Observation}
\newtheorem{rem}[thm]{Remark}
\newtheorem{prop}[thm]{Proposition}
\newtheorem{cor}[thm]{Corollary}
\theoremstyle{definition}
\newtheorem{defin}[thm]{Definition}
\newtheorem{example}[thm]{Example}
\renewcommand{\bar}{\overline}
\renewcommand{\iff}{\Leftrightarrow}
\DeclareMathOperator*{\argmax}{\arg\!\max}
\DeclareMathOperator*{\argmin}{\arg\!\min}
\title{Projected Subgradient Ascent for Convex Maximization}
\author{Pedro Felzenszwalb}
\email{pff@brown.edu}
\address{Brown University}
\author{Heon Lee}
\email{heon\_lee@brown.edu}
\address{Brown University}
\begin{document}

\begin{abstract}
We consider the problem of maximizing a convex function over a closed
convex set in a real Hilbert space. For linear
functions, we show that a single orthogonal projection suffices to obtain
an approximate solution.  For continuous convex functions over convex
sets, we show that projected subgradient ascent converges to a
first-order stationary point when using arbitrarily large step sizes.
Taking the step sizes to infinity leads to
a deterministic variant of the conditional gradient algorithm, and iterated
linear optimization as a special case. 
\end{abstract}
\maketitle

Keywords: Linear optimization, Orthogonal projection, Nonlinear programming, Subgradient methods.

MSC: 90C05, 90C22, 90C25



\section{Introduction}

We consider the problem of maximizing a convex function over a
nonempty closed and convex set \(S\) in a real Hilbert space \(\cH\). Of particular interest is the
case of linear optimization: \begin{equation}\label{eq:optimization}
  \max_{x\in S}\;\langle c, x\rangle
\end{equation}
This problem can be solved by various methods, including but not
limited to interior-point methods, the simplex algorithm, proximal
methods, and projected gradient ascent (e.g.,~\cite{nesterov, alizadeh, bertsekas,
  boyd-vandenberghe, dantzig}).

Here we show that \emph{one} orthogonal projection suffices to obtain
an approximate solution to a linear optimization problem.  More
generally, the approach can be viewed as a single step of projected
gradient ascent using a large step size.

Let $P_S(x)$ denote the unique point in $S$ closest to $x$, i.e., the orthogonal projection.  We show
that $P_S(x_0 + \eta c)$ converges to the unique optimal solution of
\eqref{eq:optimization} closest to $x_0$ as $\eta \to \infty$
(Theorem~\ref{thm:x+eta c converges}).  We also give tight bounds on
the objective value of the solution obtained using finite $\eta$
(Lemma~\ref{lemma:bounds}).

Beyond linear objectives, we investigate the use of projected
subgradient ascent for maximizing convex functions.  In the convex
minimization setting, projected subgradient descent requires vanishing
step sizes.  In contrast, we show that for convex maximization,
projected subgradient ascent converges to a first-order stationary
point when using arbitrarily large step sizes
(Theorem~\ref{thm:pga-first-order-stationarity}).  This result holds
for continuous convex functions in infinite-dimensional Hilbert spaces.  Previous work on nonconvex minimization in finite-dimensional spaces (e.g., \cite{attouch-bolte-svaiter}) typically rely on differentiability, Lipschitz continuity of the gradient, and the Kurdyka-Łojasiewicz inequality to ensure convergence.  Our analysis removes these structural requirements by leveraging convexity of the function being maximized.

In practice, the large step size regime of projected subgradient ascent
may lead to faster convergence and yield meaningful behavior.  In the
limit as the step size goes to infinity, projected subgradient ascent leads to the conditional gradient method with unit step size
(Section~\ref{sec:CG-PGA-ILO}).  This limit also generalizes the
iterated linear optimization paradigm introduced
in~\cite{felzenszwalb}.

The idea of using a single projection for linear optimization was
previously considered in \cite{mangasarian} in the context of linear
programming where $S$ is a polytope. The same procedure was
rediscovered in \cite{nurminski} under the assumption of strict
complementarity, and subsequently extended to infinite-dimensional
convex problems satisfying a sharpness property in \cite{bui}.  Here
we consider general convex sets that may have smooth
boundaries.  Such sets arise in a variety of settings including in semidefinite programming.
The idea of using a single projection for linear optimization was also
considered recently, and independently of our work, in
\cite{woodstock}.  Compared to \cite{woodstock}, the bounds we derive
in Section~\ref{sec:reformulation} are tighter, and we prove
convergence to a unique optimum solution besides proving convergence
of the objective value as was done in \cite{woodstock}.


The reduction of linear optimization to orthogonal projection
(Section~\ref{sec:reformulation}) can be used both to understand the
relative complexity of the two operations and to derive new algorithms
for linear optimization using existing algorithms for projection.

\cite{combettes} undertook a complexity analysis and show that on many
domains---the simplex, \(\ell_p\)-balls, nuclear-norm ball, the flow
polytope, Birkhoff polytope, and permutahedron---the best known
algorithms for linear optimization are asymptotically faster than the
best known algorithms for orthogonal projection.  The reduction of
linear optimization to orthogonal projection gives further evidence
that linear optimization over a convex set is no harder than
projecting to the same set.  While \cite{combettes} and
\cite{woodstock} suggest this as a negative result for projection-based methods, we emphasize that the reduction can be used to obtain
positive results, in that efficient algorithms for projections can lead to
efficient algorithms for linear optimization.


\section{Single Projection for Linear Optimization}
\label{sec:reformulation}

Let \(\cH\) be a real Hilbert space and \(S\subseteq\cH\) be a
nonempty closed and convex subset. Throughout the paper, we assume
that the maximum in \eqref{eq:optimization} is attained by at least
one point of \(S\). If \(S\) is weakly compact, then the maximum is
always attained. Define the set of maximizers \[\cM(c) = \argmax_{x\in
  S}\,\langle c, x\rangle.\]

Since \(S\) is closed and convex, and \(\langle c, x\rangle\) is a
linear function of \(x\), the set of maximizers \(\cM(c)\) is
non-empty, closed, and convex.  Let $x_0 \in \cH$.  Denote by \(\|\cdot\|\) the norm
induced by the inner product.  By the Hilbert
Projection Theorem, there is a unique element $x^* \in \cM(c)$ closest to $x_0$,
\[x^* = \argmin_{x\in \cM(c)}\|x-x_0\|.\]

Consider the orthogonal projection map \(P_S: \mathcal H\to S\) taking
a point \(x\in\cH\) to the unique closest point of $x$ in
$S$, \[P_S(x) = \argmin_{y\in S}\|y-x\|.\]

For \(\eta \in\rr\), let \[x^\eta = P_S(x_0+\eta c).\]

Throughout the paper, we let \(\to\) and \(\rightharpoonup\) denote strong and weak convergence respectively. We show that \(x^\eta\to x^*\) as \(\eta\to\infty\) and give
explicit bounds on $\eta$ that guarantee a good approximation.  
This provides a method for approximating \(x^*\) using a single
orthogonal projection.

Figure~\ref{fig:ellipse-example} illustrates the convergence
in the case of an ellipse in the plane with
$x_0$ at the origin.  Since $x^\eta$ is the orthogonal projection of
$\eta c$ we have that $\eta c - x^\eta$ is perpendicular to the
ellipse at $x^\eta$.  Therefore $x^\eta$ maximizes \(\langle \eta c -
x^\eta, x\rangle\).  As $\eta$ grows, $\eta c -x^\eta$ becomes parallel
to $c$, and in the limit $x^\eta$ maximizes \(\langle c, x\rangle\).

\begin{figure}[ht]
    \centering
    \includegraphics[width=0.8\linewidth]{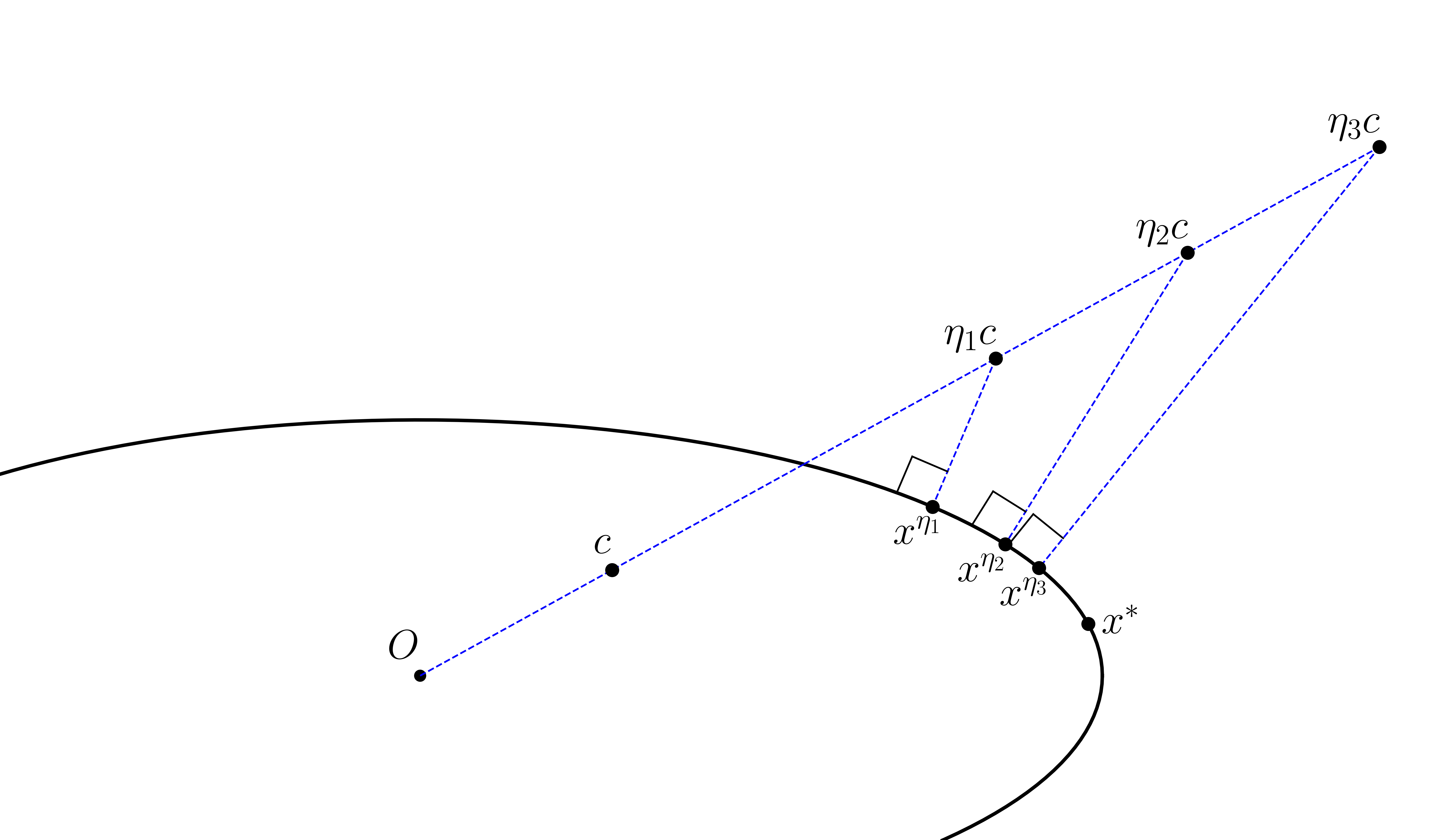}
    \caption{Linear maximization of $\langle c, x \rangle$ via a single projection $x^\eta = P_S(c\eta)$ with $\eta \to \infty$.}
    \label{fig:ellipse-example}
\end{figure}

The use of a single projection for linear optimization was considered 
in \cite{mangasarian} and \cite{nurminski} in the context of linear
programming, where projecting a suitably rescaled cost vector yields
an optimal basic feasible solution. \cite{bui} extended these results to infinite-dimensional spaces under a sharpness assumption. While these work sought an explicit
\(\eta\) that produces an exact solution in the polyhedral setting,  we consider more general (non-polyhedral) closed convex sets in a
Hilbert space and derive bounds on \(\eta\) that ensure an arbitrarily
close approximation to the objective value.  Note that for
smooth sets, no finite $\eta$ achieves optimality.

A result similar to Lemma~\ref{lemma:bounds} appears in
\cite{woodstock}.  However, our lemma provides a more precise
characterization that depends on $x_0$.  This also allows us to prove convergence of
$P_S(x_0+\eta c)$ in Theorem~\ref{thm:x+eta c converges}, including in the
case of general (non-polyhedral) convex sets $S$.

Practical use of $x^\eta$ to approximate $x^*$ requires a
choice for \(\eta\).  The choice should balance the
computational complexity of computing the projection \(x^\eta =
P_S(x_0 + \eta c)\) and the quality of the approximation.  For
some convex sets, projection can be performed efficiently.  Methods
based on alternating projections, such as Dykstra's
algorithm~\cite{dykstra}, can also be used in various settings.

The quality of an approximate solution can be measured in terms of the
difference in objective value $\langle c, x^*\rangle - \langle c,
x^\eta\rangle$.  We first provide a bound on the value of
\(\eta\) sufficient for some desired approximation. Then we demonstrate
convergence of the solution.

We start by recalling some basic results.

\begin{defin}(Normal Cone)
For a point \(x\in S\), the \emph{normal cone} of \(S\) at \(x\) is
the set \[N_S(x) = \{y\in\cH: \langle y, x\rangle \geq \langle y,
z\rangle\,\forall z\in S\}.\]
\end{defin}

The following two lemmas follow from the definition of normal cones
and the optimality condition for projections, i.e., \(\nabla
f(y)^T(z-y)\geq 0\) for all \(z\in S\) where \(y=P_S(x)\) and \(f(y) =
\tfrac12\|y-x\|^2\).

\begin{lem}\label{lem: lin opt normal cone}
Let \(x\in\cH\) and \(y\in S\). Then \[y\in \cM(x)\text{ if and only
  if }x\in N_S(y).\]
\end{lem}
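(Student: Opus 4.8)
The plan is to prove both implications directly from the definitions, unwinding what it means for $y$ to maximize $\langle x, \cdot\rangle$ over $S$ and comparing with the defining inequality of the normal cone $N_S(y)$.

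First I would handle the forward direction. Suppose $y \in \cM(x)$, i.e., $\langle x, y\rangle \geq \langle x, z\rangle$ for all $z \in S$. This is precisely the statement that $x \in N_S(y)$, reading off the definition $N_S(y) = \{w \in \cH : \langle w, y\rangle \geq \langle w, z\rangle \ \forall z \in S\}$ with $w = x$. So this direction is immediate.

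For the converse, suppose $x \in N_S(y)$. By definition this means $\langle x, y\rangle \geq \langle x, z\rangle$ for all $z \in S$. Since $y \in S$ by hypothesis, this says exactly that $y$ attains the maximum of the linear functional $z \mapsto \langle x, z\rangle$ over $S$, hence $y \in \cM(x)$.

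In fact the two conditions are literally the same inequality, so there is no real obstacle here; the only point worth being careful about is that the converse uses the assumption $y \in S$ (otherwise $y \in N_S(y)$'s defining inequality would not by itself place $y$ in the feasible set over which the max is taken). I would state the equivalence of the two inequalities cleanly and note the role of $y \in S$. No appeal to the Hilbert Projection Theorem or to the projection optimality condition is needed for this particular lemma — those will be used for the companion lemma about $P_S$ — so the proof is a one-line observation once the definitions are placed side by side.
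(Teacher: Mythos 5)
Your proof is correct and matches the paper's intent: the paper states this lemma without a written proof, noting only that it ``follows from the definition of normal cones,'' and your argument is exactly that definitional unwinding, with the right observation that the hypothesis \(y\in S\) is what makes the converse direction legitimate. Nothing further is needed.
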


\begin{lem}\label{lem: proj normal cone}
Let \(x\in\cH\) and \(y\in S\). Then \[y=P_S(x)\text{ if and only if
}x-y\in N_S(y).\]
\end{lem}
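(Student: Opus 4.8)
The plan is to obtain both implications from the first-order optimality condition for the convex minimization problem that defines the projection. Recall that $y = P_S(x)$ precisely when $y$ minimizes $f(w) = \tfrac12\|w - x\|^2$ over the closed convex set $S$, and that such a minimizer exists and is unique by the Hilbert Projection Theorem. Since $f$ is convex and G\^ateaux differentiable with $\nabla f(w) = w - x$, a point $y \in S$ minimizes $f$ over $S$ if and only if the variational inequality
\[
\langle y - x,\, z - y\rangle \;\ge\; 0 \qquad \text{for all } z \in S
\]
holds. I will use this variational characterization as the bridge between the two sides of the lemma.

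For the forward direction, assume $y = P_S(x)$. Rearranging the displayed inequality gives $\langle x - y,\, z\rangle \le \langle x - y,\, y\rangle$ for all $z \in S$, which is exactly the assertion $x - y \in N_S(y)$ by the definition of the normal cone. For the converse, assume $x - y \in N_S(y)$, i.e. $\langle x - y,\, z - y\rangle \le 0$ for all $z \in S$. Then for any $z \in S$, writing $x - z = (x - y) + (y - z)$ and expanding the norm square yields
\[
\|x - z\|^2 \;=\; \|x - y\|^2 + 2\langle x - y,\, y - z\rangle + \|y - z\|^2 \;\ge\; \|x - y\|^2,
\]
because $\langle x - y,\, y - z\rangle = -\langle x - y,\, z - y\rangle \ge 0$. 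Hence $y$ minimizes $\|x - \cdot\|$ over $S$, and uniqueness of the projection forces $y = P_S(x)$.

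I do not anticipate a genuine obstacle. The only steps that merit care are the justification of the first-order optimality condition in an arbitrary Hilbert space---which holds because the one-sided directional derivative of $f$ at $y$ along $z - y$ equals $\langle y - x,\, z - y\rangle$ and must be nonnegative at a minimizer over the convex set $S$---and the observation that the converse direction uses both the convexity of $S$ (so that $z$ genuinely ranges over all of $S$) and the completeness of $\cH$ (so that the projection is well-defined and unique). The companion Lemma~\ref{lem: lin opt normal cone} follows from the analogous first-order condition for maximizing the linear functional $\langle x,\,\cdot\rangle$ over $S$, which unwinds to the same normal-cone inequality.
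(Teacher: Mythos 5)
Your proof is correct and follows exactly the route the paper indicates: the paper states that this lemma ``follows from the definition of normal cones and the optimality condition for projections, i.e., \(\nabla f(y)^{T}(z-y)\geq 0\) for all \(z\in S\) where \(y=P_S(x)\) and \(f(y)=\tfrac12\|y-x\|^2\),'' and you have simply written out both directions of that equivalence in full. No discrepancies.
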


The next key lemma shows $\langle c,x^\eta \rangle$ approaches $\langle c,x^* \rangle$ as $\eta$ grows.

\begin{lem}
  \label{lemma:bounds}
  Let \(\eta > 0\). Then \[0\leq \langle c, x^*\rangle - \langle c, x^\eta\rangle \leq \frac{\|x^*-x_0\|^2 - \|x^\eta-x_0\|^2}{2\eta}.\]
\end{lem}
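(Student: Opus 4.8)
The plan is to exploit the two characterizations just recalled: $x^* \in \cM(c)$ means $c \in N_S(x^*)$ (Lemma~\ref{lem: lin opt normal cone}), and $x^\eta = P_S(x_0 + \eta c)$ means $(x_0 + \eta c) - x^\eta \in N_S(x^\eta)$ (Lemma~\ref{lem: proj normal cone}). Writing out these two normal-cone memberships as inequalities against the other point gives two scalar inequalities, and the lemma should fall out by adding them and doing a little algebra with the squared norms.

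First I would prove the lower bound $\langle c, x^*\rangle - \langle c, x^\eta\rangle \ge 0$: this is immediate since $x^* \in \cM(c)$ is a maximizer of $\langle c, \cdot\rangle$ over $S$ and $x^\eta \in S$. Next, for the upper bound, I would apply $c \in N_S(x^*)$ to the point $z = x^\eta \in S$ to get $\langle c, x^*\rangle \ge \langle c, x^\eta\rangle$ — the same inequality again — but more usefully apply the projection optimality condition $(x_0 + \eta c) - x^\eta \in N_S(x^\eta)$ to the point $z = x^* \in S$, yielding
\[
  \langle (x_0 + \eta c) - x^\eta,\, x^\eta \rangle \ \ge\ \langle (x_0 + \eta c) - x^\eta,\, x^* \rangle,
\]
i.e. $\langle x_0 + \eta c - x^\eta,\, x^\eta - x^*\rangle \ge 0$. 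Rearranging the $\eta c$ term to one side gives
\[
  \eta\big(\langle c, x^*\rangle - \langle c, x^\eta\rangle\big) \ \le\ \langle x_0 - x^\eta,\, x^\eta - x^*\rangle.
\]
So it remains to show $\langle x_0 - x^\eta,\, x^\eta - x^*\rangle \le \tfrac12\big(\|x^* - x_0\|^2 - \|x^\eta - x_0\|^2\big)$.

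That last step is a pure Hilbert-space identity/inequality. I would expand $\|x^* - x_0\|^2 = \|(x^* - x^\eta) + (x^\eta - x_0)\|^2 = \|x^* - x^\eta\|^2 + 2\langle x^* - x^\eta,\, x^\eta - x_0\rangle + \|x^\eta - x_0\|^2$, which rearranges to
\[
  \tfrac12\big(\|x^* - x_0\|^2 - \|x^\eta - x_0\|^2\big) \ =\ \langle x^* - x^\eta,\, x^\eta - x_0\rangle + \tfrac12\|x^* - x^\eta\|^2 \ \ge\ \langle x_0 - x^\eta,\, x^\eta - x^*\rangle,
\]
since $\langle x^* - x^\eta,\, x^\eta - x_0\rangle = \langle x_0 - x^\eta,\, x^\eta - x^*\rangle$ and $\tfrac12\|x^* - x^\eta\|^2 \ge 0$. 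Combining with the previous display and dividing by $\eta > 0$ gives the claimed upper bound.

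There is no real obstacle here — the argument is short — but the one place to be careful is bookkeeping of signs when rearranging $\langle x_0 + \eta c - x^\eta,\, x^\eta - x^*\rangle \ge 0$ into a bound on $\eta(\langle c, x^*\rangle - \langle c, x^\eta\rangle)$, and making sure the discarded nonnegative term $\tfrac12\|x^* - x^\eta\|^2$ is discarded in the direction that keeps the inequality valid. It is worth noting that this term is exactly the slack in the bound, which is why the upper bound is tight in the smooth case and explains the $\|x^\eta - x_0\|^2$ appearing with a minus sign.
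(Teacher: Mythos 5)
Your proof is correct and is essentially the paper's argument in a different dress: the paper compares $\|(x_0+\eta c)-x^\eta\|^2 \le \|(x_0+\eta c)-x^*\|^2$ directly and expands, whereas you invoke the variational inequality $\langle (x_0+\eta c)-x^\eta,\, x^*-x^\eta\rangle \le 0$ from Lemma~\ref{lem: proj normal cone}; these are equivalent characterizations of the projection and lead to the same algebra. The one byproduct of your route is the explicit slack term $\tfrac12\|x^*-x^\eta\|^2$, which you correctly discard; keeping it would give the marginally stronger bound
\[
\langle c, x^*\rangle - \langle c, x^\eta\rangle \;\le\; \frac{\|x^*-x_0\|^2 - \|x^\eta-x_0\|^2 - \|x^*-x^\eta\|^2}{2\eta}.
\]
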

\begin{proof}
Since \(x^\eta\) is the closest point in \(S\) from \(x_0 + \eta
c\), \[\|(x_0 + \eta c) - x^\eta\|^2 \leq \|(x_0 + \eta c)- x^*\|^2.\] Expanding and
rearranging the terms, \[2\eta\left(\langle c, x^*\rangle -
\langle c, x^\eta\rangle\right)\leq \|x^*-x_0\|^2 - \|x^\eta-x_0\|^2.\] The
left-hand side is nonnegative because \(x^*\) maximizes \(\langle c,
x\rangle\).  We obtain the desired inequality by dividing both sides by
\(2\eta\).
\end{proof}

When \(S\) is bounded and $x_0 \in S$, we can bound the approximation error
using the diameter of \(S\).  

\begin{obs}
  \label{prop:diam}
Let $x_0 \in S$ and \(\eta > 0\). Then \[0\leq \langle c,x^*\rangle -
\langle c, x^\eta\rangle \leq \frac{\mathrm{diam}(S)^2}{2\eta}.\]
\end{obs}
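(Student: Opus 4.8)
The plan is to derive Observation~\ref{prop:diam} as an immediate corollary of Lemma~\ref{lemma:bounds}. The left inequality $0 \leq \langle c, x^*\rangle - \langle c, x^\eta\rangle$ is already provided by the lemma (and simply restates that $x^*$ is a maximizer), so the only work is to control the right-hand side $\frac{\|x^*-x_0\|^2 - \|x^\eta - x_0\|^2}{2\eta}$ by $\frac{\mathrm{diam}(S)^2}{2\eta}$.

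First I would discard the subtracted term: since $\|x^\eta - x_0\|^2 \geq 0$, we have
\[
\frac{\|x^*-x_0\|^2 - \|x^\eta-x_0\|^2}{2\eta} \leq \frac{\|x^*-x_0\|^2}{2\eta}.
\]
Then I would use the hypothesis $x_0 \in S$ together with the fact that $x^* \in \cM(c) \subseteq S$: both $x_0$ and $x^*$ lie in $S$, so $\|x^* - x_0\| \leq \mathrm{diam}(S)$ by definition of the diameter, and hence $\|x^*-x_0\|^2 \leq \mathrm{diam}(S)^2$. Chaining these with Lemma~\ref{lemma:bounds} gives the claimed bound.

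Strictly, one should note that if $S$ is unbounded then $\mathrm{diam}(S) = +\infty$ and the statement is vacuous, so there is nothing to prove; the content is entirely in the bounded case, where the above two-line argument applies. There is essentially no obstacle here: the proposition is a clean specialization of the preceding lemma, and the role of the hypothesis $x_0 \in S$ is precisely to make the distance $\|x^* - x_0\|$ a legitimate "diameter" quantity rather than something depending on the position of an external anchor point $x_0$. If one wanted to be slightly more careful, one could observe that the bound in Lemma~\ref{lemma:bounds} is already tighter, so Observation~\ref{prop:diam} is only a convenient, anchor-independent weakening stated for readability.
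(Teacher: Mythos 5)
Your proof is correct and is exactly the argument the paper intends (the Observation is stated without proof as an immediate consequence of Lemma~\ref{lemma:bounds}): drop the nonnegative term $\|x^\eta-x_0\|^2$ and bound $\|x^*-x_0\|\leq \mathrm{diam}(S)$ using $x_0,x^*\in S$. Nothing is missing.
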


Observation~\ref{prop:diam} implies we can choose $\eta =
\mathrm{diam}(S)^2/(2\epsilon)$ to ensure $\langle c,x^* \rangle -
\langle c,x^\eta \rangle \le \epsilon$.  The following example
illustrates that this does not mean that $x^\eta$ is close to $x^*$.
In fact $x^\eta$ and $x^*$ may be far in a direction orthogonal to $c$
depending on $x_0$.

\begin{example}\label{ex:bound-tight-square}
Figure~\ref{fig:square-example} shows an example where \(S\subseteq \rr^2\) is a square centered at the origin with
vertices \(\{(\pm 1, \pm1)\}\).   Let $x_0 = O$.  For any \(\eta > 1\),
there exists \(c\) such that \[\|x^* - x^\eta\|^2 \geq
\frac{1}{4}.\] Indeed, let $c = (\tfrac{1}{2\eta}, 1)$. The optimal solution is $x^* =
(1,1)$ while $x^\eta = (\tfrac{1}{2}, 1)$.
\end{example}

\begin{figure}[ht]
    \centering
    \includegraphics[width=0.4\linewidth]{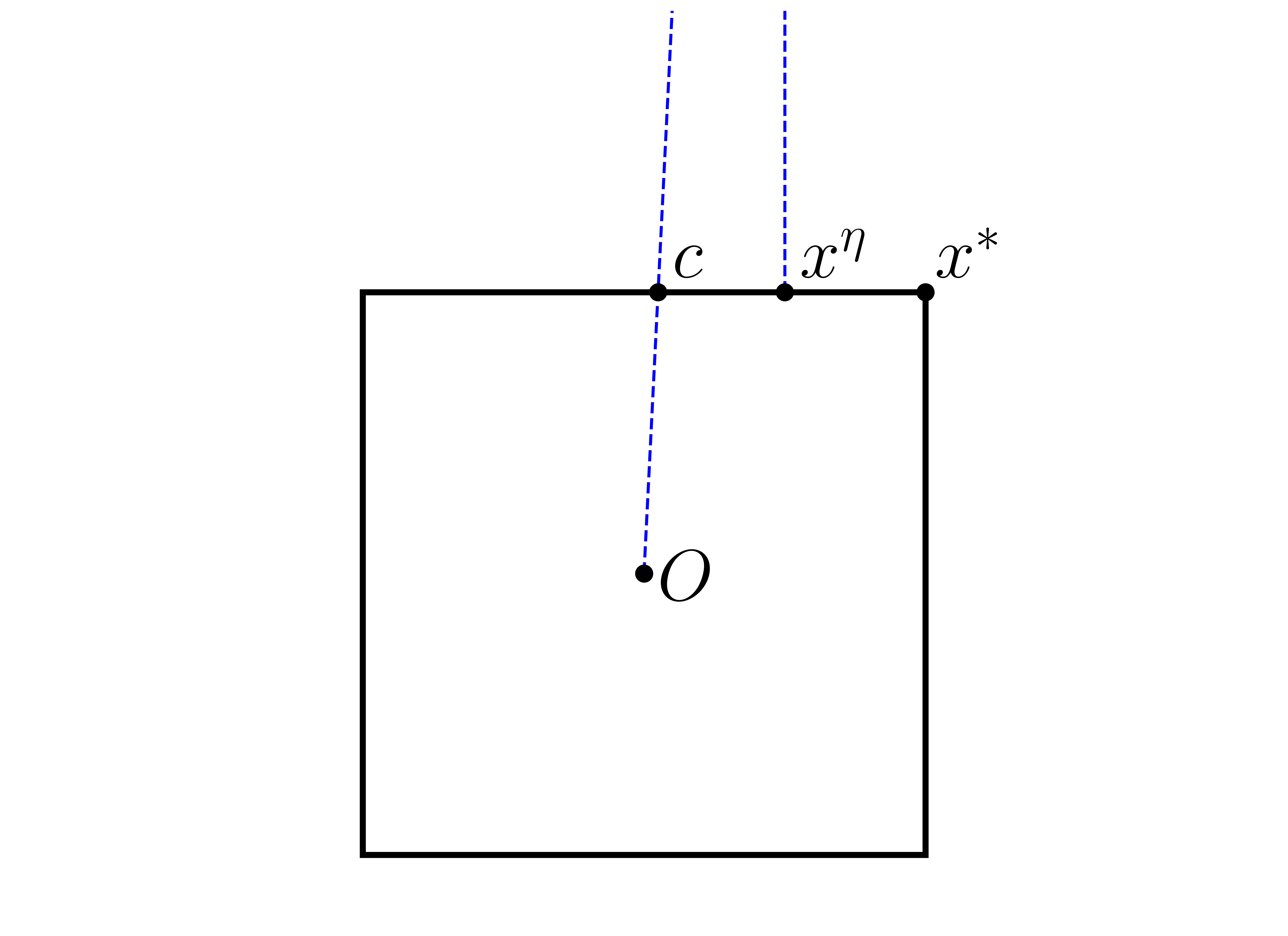}
    \caption{Illustration of Example~\ref{ex:bound-tight-square}.  By
      selecting $\eta$ sufficiently large we can ensure $\langle
      c,x^*\rangle - \langle c,x^\eta \rangle \le \epsilon$
      independent of $c$, while $\|x^*-x^\eta\|$ remains large.}
    \label{fig:square-example}
\end{figure}

Now we show $x^\eta \to x^*$ when $\eta \to \infty$.

Note that both $x^\eta$ and $x^*$ are orthogonal projections, with $x^\eta = P_S(x_0+\eta c)$ and $x^* = P_{\cM(c)}(x_0)$.

\begin{thm}\label{thm:x+eta c converges}
  Let \(S\subseteq \cH\) be a closed and convex set and \(c\in\cH\) such that \(\cM(c)\) is nonempty.  Then \[\lim_{\eta\to\infty}  P_S(x_0+\eta c)  = P_{\cM(c)}(x_0).\]
\end{thm}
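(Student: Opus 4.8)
The plan is to show the net $(x^\eta)_{\eta>0}$ stays bounded, extract weak subsequential limits, identify every such limit as $P_{\cM(c)}(x_0)$, and then upgrade weak convergence to norm convergence. First I would bound $\|x^\eta - x_0\|$: since $x^* \in S$ and $x^\eta$ is the closest point of $S$ to $x_0+\eta c$, we have $\|x_0+\eta c - x^\eta\| \le \|x_0 + \eta c - x^*\|$; squaring, expanding, and using that $\langle c, x^\eta\rangle \le \langle c, x^*\rangle$ shows $\|x^\eta - x_0\|^2 \le \|x^* - x_0\|^2 + 2\eta(\langle c,x^*\rangle - \langle c,x^\eta\rangle)$, and combined with Lemma~\ref{lemma:bounds} (which bounds that last product by $\tfrac12(\|x^*-x_0\|^2 - \|x^\eta - x_0\|^2)$) this forces $\|x^\eta - x_0\| \le \|x^* - x_0\|$ for all $\eta > 0$. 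So the net lies in a fixed ball around $x_0$.

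Next, Lemma~\ref{lemma:bounds} also gives $\langle c, x^\eta\rangle \to \langle c, x^*\rangle$ as $\eta \to \infty$, since the numerator $\|x^*-x_0\|^2 - \|x^\eta - x_0\|^2$ is bounded (indeed nonnegative and at most $\|x^*-x_0\|^2$) while the denominator $2\eta \to \infty$. Now let $\bar x$ be any weak subsequential limit of $x^{\eta_n}$ with $\eta_n \to \infty$; such limits exist because bounded sets in a Hilbert space are weakly sequentially precompact, and $\bar x \in S$ since $S$ is closed and convex, hence weakly closed. Weak lower semicontinuity of the norm gives $\|\bar x - x_0\| \le \liminf \|x^{\eta_n} - x_0\| \le \|x^* - x_0\|$. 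Also $\langle c, \bar x\rangle = \lim \langle c, x^{\eta_n}\rangle = \langle c, x^*\rangle$, so $\bar x \in \cM(c)$. But $x^* = P_{\cM(c)}(x_0)$ is the unique point of $\cM(c)$ minimizing distance to $x_0$, and $\bar x \in \cM(c)$ satisfies $\|\bar x - x_0\| \le \|x^* - x_0\|$; uniqueness forces $\bar x = x^*$. Since every weakly convergent subsequence has the same limit $x^*$ and the net is bounded, $x^\eta \rightharpoonup x^*$ weakly along $\eta \to \infty$.

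Finally, to get norm convergence: we have $\limsup_{\eta\to\infty} \|x^\eta - x_0\| \le \|x^* - x_0\|$ from the first step, and weak convergence $x^\eta \rightharpoonup x^*$ gives $\|x^* - x_0\| \le \liminf_{\eta\to\infty}\|x^\eta - x_0\|$ by weak lower semicontinuity. Hence $\|x^\eta - x_0\| \to \|x^* - x_0\|$, and combined with $x^\eta \rightharpoonup x^*$ this yields $x^\eta \to x^*$ in norm (in a Hilbert space, weak convergence plus convergence of norms implies strong convergence, via $\|x^\eta - x^*\|^2 = \|x^\eta - x_0\|^2 - 2\langle x^\eta - x_0, x^* - x_0\rangle + \|x^* - x_0\|^2 \to 0$). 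I expect the main obstacle to be handling the infinite-dimensional setting cleanly — specifically, being careful that we are working with a net rather than a sequence, and that the compactness used is weak sequential compactness of bounded sets (so one argues via sequences $\eta_n \to \infty$ and the standard subsequence characterization of net convergence), rather than assuming any norm compactness of $S$.
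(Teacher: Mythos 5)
Your proposal is correct and follows essentially the same route as the paper's proof: use Lemma~\ref{lemma:bounds} to confine $x^\eta$ to the ball of radius $\|x^*-x_0\|$ about $x_0$, extract weak subsequential limits, identify each as $x^*$ via membership in $\cM(c)$ together with uniqueness of $P_{\cM(c)}(x_0)$, and upgrade to strong convergence by combining weak convergence with convergence of norms (the Radon--Riesz property). The only cosmetic difference is that the bound $\|x^\eta-x_0\|\le\|x^*-x_0\|$ follows immediately from the nonnegativity of the left-hand side in Lemma~\ref{lemma:bounds}, without the extra expansion step you perform.
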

\begin{proof}
Lemma~\ref{lemma:bounds} implies that \[\|x^\eta-x_0\| \leq
\|x^*-x_0\|\] for all \(\eta > 0\).  Therefore, \(x^\eta\) lies in a
closed ball of radius \(\|x^*\|\) centered at \(x_0\). Closed balls in Hilbert space are
weakly sequentially compact.  Consider an arbitrary increasing sequence
\(\{\eta_j\}_{j\in\nn}\) such that \(x^{\eta_j} \rightharpoonup \bar x\) for
some \(\bar x\in S\). We show that \(x^{\eta_j}\to x^*\) as \(j\to\infty\).

Lemma~\ref{lemma:bounds} implies that $\bar x \in \cM(c)$.  Lemma~\ref{lemma:bounds} also implies that \(\|\bar x-x_0\| \leq \|x^*-x_0\|\).
Since $x^*$ is the unique element of $\cM(c)$ closest to $x_0$, we conclude $\bar x = x^*$.

Since \(\cH\) is a real Hilbert space, it satisfies the Radon-Riesz property (see, e.g.~\cite{megginson}). To show strong convergence of \(\{x^{\eta_j}\}\), it then remains to show that \(\|x^{\eta_j}\|\to\|x^*\|\). 

Lemma~\ref{lemma:bounds} implies $\|x^\eta-x_0\|^2 \leq \|x^*-x_0\|^2$.  Therefore \(\limsup_{j}\|x^{\eta_j}-x_0\|^2\leq \|x^*-x_0\|^2\). The weak lower semicontinuity of the norm yields \(\|x^*-x_0\| \leq \liminf_{j}\|x^{\eta_j}-x_0\|\). Since the limit inferior is at most the limit superior, combining both inequalities, we obtain \[\|x^*-x_0\|^2 \leq \liminf_j\|x^{\eta_j}-x_0\|^2 \leq \limsup_j\|x^{\eta_j}-x_0\|^2 \leq\|x^*-x_0\|^2.\] Therefore, \(\lim_{j}\|x^{\eta_j}-x_0\|^2 = \|x^{*}-x_0\|^2\). Expanding both sides, we conclude that \(\|x^{\eta_j}\|\to \|x^*\|\).

Since every accumulation point strongly converges to \(x^*\), the full limit holds.
\end{proof}

One naturally expects the linear objective to grow with the scale factor \(\eta\), so that larger \(\eta\) yields progressively better solutions. Indeed, this monotonicity holds.

\begin{prop}
  \label{prop:grow}
If \(\eta_1 < \eta_2\), then \[\langle c, x^{\eta_1}\rangle \leq \langle c, x^{\eta_2}\rangle.\]
\end{prop}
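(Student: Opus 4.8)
The plan is to exploit the variational characterization of the projection at both scales and add the two resulting inequalities. Write $y_1 = x^{\eta_1}$ and $y_2 = x^{\eta_2}$. Since $y_1 = P_S(x_0 + \eta_1 c)$ and $y_2 \in S$, the optimality condition for projections (Lemma~\ref{lem: proj normal cone}, i.e.\ $x_0 + \eta_1 c - y_1 \in N_S(y_1)$) gives $\langle x_0 + \eta_1 c - y_1,\, y_2 - y_1\rangle \le 0$. Symmetrically, since $y_2 = P_S(x_0 + \eta_2 c)$ and $y_1 \in S$, we get $\langle x_0 + \eta_2 c - y_2,\, y_1 - y_2\rangle \le 0$.

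The next step is to add these two inequalities. The $x_0$ terms pair up as $\langle x_0,\, y_2 - y_1\rangle + \langle x_0,\, y_1 - y_2\rangle = 0$, and the quadratic terms combine to $-\langle y_1 - y_2,\, y_2 - y_1\rangle = \|y_1 - y_2\|^2 \ge 0$. What remains, after moving that nonnegative term to the other side, is
\[
\eta_1 \langle c,\, y_2 - y_1\rangle + \eta_2 \langle c,\, y_1 - y_2\rangle \ge \|y_1 - y_2\|^2 \ge 0,
\]
which rearranges to $(\eta_2 - \eta_1)\langle c,\, y_1 - y_2\rangle \ge 0$. Since $\eta_2 - \eta_1 > 0$, this yields $\langle c, y_1\rangle \le \langle c, y_2\rangle$, i.e.\ $\langle c, x^{\eta_1}\rangle \le \langle c, x^{\eta_2}\rangle$, as claimed.

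I do not anticipate a genuine obstacle here; the argument is the standard "firm nonexpansiveness / monotonicity of the resolvent" computation specialized to projections. The only point requiring a little care is the sign bookkeeping when adding the two normal-cone inequalities — making sure the $x_0$ contributions cancel and that the cross term $\|y_1 - y_2\|^2$ comes out with the right sign — but this is routine. One could alternatively phrase the same computation without invoking normal cones at all, directly from the two defining inequalities $\|x_0 + \eta_i c - y_i\|^2 \le \|x_0 + \eta_i c - y_j\|^2$, expanding and adding; I would present whichever is shorter, which is the normal-cone version above.
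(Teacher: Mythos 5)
Your approach is sound and is essentially the paper's argument in disguise: the paper adds the two defining inequalities $\|(x_0+\eta_i c)-x^{\eta_i}\|^2 \le \|(x_0+\eta_i c)-x^{\eta_j}\|^2$ and expands, which is exactly the ``alternative phrasing'' you mention at the end; your normal-cone version is the same computation and additionally retains the term $\|y_1-y_2\|^2$, giving the slightly stronger statement $(\eta_2-\eta_1)\langle c,\,y_2-y_1\rangle \ge \|y_1-y_2\|^2$.

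However, the sign bookkeeping you flagged as the one delicate point is in fact where you slip, twice. Adding the two normal-cone inequalities gives
\[
\eta_1\langle c,\,y_2-y_1\rangle + \eta_2\langle c,\,y_1-y_2\rangle + \|y_1-y_2\|^2 \;\le\; 0,
\]
so after moving the quadratic term across you should get
\[
\eta_1\langle c,\,y_2-y_1\rangle + \eta_2\langle c,\,y_1-y_2\rangle \;\le\; -\|y_1-y_2\|^2 \;\le\; 0,
\]
not $\ge \|y_1-y_2\|^2$ as you wrote; the left-hand side equals $(\eta_2-\eta_1)\langle c,\,y_1-y_2\rangle$, which is therefore \emph{nonpositive}, and your displayed inequality is actually false whenever $y_1\ne y_2$. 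You then commit a second sign error in the last step (from $(\eta_2-\eta_1)\langle c,\,y_1-y_2\rangle\ge 0$ and $\eta_2>\eta_1$ one would conclude $\langle c,y_1\rangle\ge\langle c,y_2\rangle$, not $\le$), and the two errors cancel to land on the correct conclusion. The corrected chain, $(\eta_2-\eta_1)\langle c,\,y_2-y_1\rangle \ge \|y_1-y_2\|^2 \ge 0$, gives the result cleanly. This is a write-up defect rather than a conceptual gap, but as literally written the middle of the proof does not hold.
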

\begin{proof}
  By definition of \(x^\eta\),
  \[\|(x_0 + \eta_1 c)-x^{\eta_1}\|^2 \leq \|(x_0 + \eta_1 c)-x^{\eta_2}\|^2\text{ and }\|(x_0 + \eta_2 c)-x^{\eta_2}\|^2\leq \|(x_0 + \eta_2 c)-x^{\eta_1}\|^2.\]

  Expanding then adding the two inequalities, we obtain
  \[({\eta_2}-\eta_1)\langle c, x^{\eta_1}\rangle \leq ({\eta_2}-\eta_1)\langle c, x^{\eta_2}\rangle.\]
  Since \({\eta_2}-\eta_1 > 0\), we obtain the desired inequality.
\end{proof}

\section{Projected Subgradient Ascent}
\label{sec:PGA}

In this section, we analyze the behavior of projected subgradient ascent
for maximizing a continuous convex function \(f\) over a closed convex set
\(S\subseteq \cH\).  The problem of maximizing a convex function is
NP‐hard (e.g.,~\cite{pardalos-vavasis}).  Therefore, we focus on
obtaining first-order stationary points. 

For convex minimization, projected subgradient descent (PGD) with a
diminishing step size sequence \(\{\eta_k\}_{k\in\nn}\)
satisfying \[\sum_{k=0}^\infty\eta_k=\infty\text{ and }\eta_k \to 0\]
converges to a global minimizer (see, e.g.,~\cite{bertsekas-convex}).  In
contrast, when maximizing a convex function using projected subgradient
ascent (PGA) convergence to a \emph{global optimum} is not guaranteed, regardless
of the choice of step sizes.  Rather than vanishing steps, we focus on
the case of \emph{arbitrarily large step sizes} and show that PGA
\emph{always} converges to a \emph{first-order stationary point} of \(f\).

\begin{defin}(Projected Subgradient Descent)
  Let \(f:\cH\to\rr\) be a convex function, \(S\subseteq \cH\) be a
  nonempty closed convex subset, and \(\{\eta_k\}_{k\in\nn}\) be a
  sequence of step sizes.

  \emph{Projected subgradient descent}
  generates a sequence of iterates \(\{x_k\}_{k\in\nn}\) in
  \(S\), \[x_{k+1} = P_S(x_k - \eta_k g_k)\] where \(g_k\in\partial
  f(x_k)\) is any subgradient of \(f\) at \(x_k\).
\end{defin}

\begin{defin}(Projected Subgradient Ascent)
  Let \(f:\cH\to\rr\) be a convex function, \(S\subseteq \cH\) be a
  nonempty closed convex subset, and \(\{\eta_k\}_{k\in\nn}\) be a
  sequence of step sizes.

  \emph{Projected subgradient ascescent}
  generates a sequence of iterates \(\{x_k\}_{k\in\nn}\) in
  \(S\), \[x_{k+1} = P_S(x_k + \eta_k g_k)\] where \(g_k\in\partial
  f(x_k)\) is any subgradient of \(f\) at \(x_k\).
\end{defin}

Proposition~\ref{prop:increasing value convex} shows that in the
convex maximization regime with PGA, the sequence \(\{f(x_k)\}\) is always
nondecreasing.  When $f$ is bounded, this
implies $\{f(x_k)\}$ converges.  If the step sizes are bounded from
above, Theorem~\ref{thm:pga-accumulation-connected} shows that the
accumulation set of $\{x_k\}$ is connected.  Moreover, if the step
sizes are bounded from below,
Theorem~\ref{thm:pga-first-order-stationarity} shows all of the
accumulation points of $\{x_k\}$ are first-order stationary points of
$f$.

These results illustrate a contrast between convex minimization with
PGD and convex maximization with PGA.  The convex minimization setting
requires vanishing steps for convergence, while in the convex
maximization setting, convergence is guaranteed with large steps.  In
Section~\ref{sec:CG-PGA-ILO}, we consider the limiting behavior when
all of the step sizes go to infinity.



\subsection{Linear Functions} 

Here, we consider the case of maximizing linear functions as it
already captures the intuition that one can take large step sizes.
The case of maximizing convex functions is a natural
generalization.

For a linear objective \(f(x) = \langle c, x\rangle\), maximizing
\(f\) with PGA is equivalent to minimizing \(-f\) with PGD.  In the
finite-dimensional case, classical results using $L$-Lipschitz
gradients guarantee both PGD and PGA converge with any constant
non-zero step size (see, e.g., \cite{bertsekas}).

In the general case of possibly infinite-dimensional spaces, weak
convergence under arbitrary step sizes can be shown using notions from
monotone operator theory (see, e.g., \cite{bauschke-combettes}). We formally state this known result (see, e.g., Theorem 2.5 in~\cite{bauschke-burke-deutsch-hundal-vanderwerf}) as Theorem~\ref{thm:pga-linear-case} to build the intuition for taking large step sizes before we tackle the broader class of general convex functions.


\begin{defin}(Operator)
An \emph{operator} \(T\) on a real Hilbert space \(\cH\) is a
mapping \[T: \cH\rightrightarrows \cH,\] meaning that each \(x\in\cH\)
is assigned a (possibly empty) subset \(T(x) \subseteq \cH\). Its
\emph{domain} is \[\mathrm{dom}\,T = \{x\in\cH: T(x)\neq\emptyset\},\]
its \emph{range} is \[\mathrm{ran}\,T = \{u\in\cH: \exists
x\in\cH\text{ with }u\in T(x)\},\] and its \emph{graph}
is \[\mathrm{gra}\,T=\{(x,u)\in\cH\times\cH: u\in T(x)\}.\]
\end{defin}

\begin{defin}(Monotone Operator)
An operator \(T: \cH\rightrightarrows\cH\) is called \emph{monotone}
if \[\langle x-y, u-v\rangle \geq 0\text{ for all
}(x,u),(y,v)\in\mathrm{gra}\,T.\]
\end{defin}

\begin{defin}(Maximal Monotone Operator)
A monotone operator \(T: \cH\rightrightarrows\cH\) is \emph{maximal
monotone} if its graph cannot be strictly enlarged without losing
monotonicity.
\end{defin}

\begin{defin}(Resolvent)
Let \(T: \cH\rightrightarrows\cH\) be a maximal monotone operator and
let \(\lambda> 0\). The \emph{resolvent} of \(T\) with parameter
\(\lambda\) is the (single-valued) mapping \[J_{\lambda T} =
(\mathrm{Id}+\lambda T)^{-1}.\]
\end{defin}

\begin{thm}\label{thm:pga-linear-case}
Let \(\{\eta_k\}_{k\in\nn}\) be a sequence of step sizes such that \(\sum_{k=0}^\infty\eta_k =
\infty\). Let $\{x_k\}_{k\in\nn}$ be a sequence of iterates generated by PGA with $f(x) = \langle c, x \rangle$.  The sequence \(\{x_k\}_{k\in\nn}\) converges weakly to a point in \(\cM(c)\).
\end{thm}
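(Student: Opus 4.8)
The plan is to exploit that $f$ is linear, so $\partial f(x)=\{c\}$ for every $x$ and the PGA iteration is simply $x_{k+1}=P_S(x_k+\eta_k c)$, and to recognize this as the proximal point (resolvent) iteration for the maximal monotone operator $A=N_S-c$, which is the subdifferential of the proper lower semicontinuous convex function $g=\iota_S-\langle c,\cdot\rangle$.  Since $P_S=J_{N_S}$ (Lemma~\ref{lem: proj normal cone}) and $N_S(y)$ is a cone, one checks $x_{k+1}=P_S(x_k+\eta_k c)=J_{\eta_k A}(x_k)$, equivalently $\tfrac{1}{\eta_k}(x_k-x_{k+1})\in A(x_{k+1})=N_S(x_{k+1})-c$.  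By Lemma~\ref{lem: lin opt normal cone} the zero set of $A$ is exactly $\cM(c)$, nonempty by hypothesis, so the task reduces to showing this resolvent iteration converges weakly to a zero of $A$ whenever $\sum_k\eta_k=\infty$.

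First I would establish Fej\'er monotonicity.  Fixing $x^*\in\cM(c)$ and testing the inclusion $x_k+\eta_k c-x_{k+1}\in N_S(x_{k+1})$ against $x^*\in S$, together with $\langle c,x^*-x_{k+1}\rangle\ge 0$ (optimality of $x^*$), gives $\langle x_k-x_{k+1},x_{k+1}-x^*\rangle\ge 0$ and hence $\|x_{k+1}-x^*\|^2+\|x_k-x_{k+1}\|^2\le\|x_k-x^*\|^2$.  Thus $\|x_k-x^*\|$ is nonincreasing for every $x^*\in\cM(c)$, the iterates are bounded, and $\sum_k\|x_k-x_{k+1}\|^2<\infty$.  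Next I would show the objective converges to the optimum: from $\tfrac{1}{\eta_k}(x_k-x_{k+1})\in\partial g(x_{k+1})$ and the subgradient inequality at $x^*$,
\[
\eta_k\big(\langle c,x^*\rangle-\langle c,x_{k+1}\rangle\big)\le\langle x_k-x_{k+1},x_{k+1}-x^*\rangle\le\tfrac12\big(\|x_k-x^*\|^2-\|x_{k+1}-x^*\|^2\big),
\]
so $\sum_k\eta_k\big(\langle c,x^*\rangle-\langle c,x_{k+1}\rangle\big)\le\tfrac12\|x_0-x^*\|^2<\infty$.  Since $\langle c,x_k\rangle$ is nondecreasing (test the same inclusion against $x_k\in S$, giving $\langle c,x_{k+1}-x_k\rangle\ge\|x_k-x_{k+1}\|^2/\eta_k\ge 0$; cf.\ Proposition~\ref{prop:increasing value convex}) and bounded above by $\langle c,x^*\rangle$, the summands are nonnegative and nonincreasing, so $\sum_k\eta_k=\infty$ forces $\langle c,x_k\rangle\to\langle c,x^*\rangle$.

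Finally I would conclude with Opial's lemma.  The bounded sequence $\{x_k\}$ has weak cluster points; any such $\bar x$ lies in $S$ (closed and convex, hence weakly closed) and satisfies $\langle c,\bar x\rangle=\lim_k\langle c,x_k\rangle=\langle c,x^*\rangle=\max_{x\in S}\langle c,x\rangle$, so $\bar x\in\cM(c)$.  Since $\|x_k-z\|$ converges for every $z\in\cM(c)$ and every weak cluster point of $\{x_k\}$ lies in $\cM(c)$, Opial's lemma yields weak convergence of the whole sequence to a point of $\cM(c)$.

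The subtle point — and the place I expect to have to be careful — is getting by with only $\sum_k\eta_k=\infty$ rather than the stronger $\sum_k\eta_k^2=\infty$ that the proximal point algorithm requires for a general maximal monotone operator.  This works here precisely because $A=N_S-c$ is a subdifferential, supplying the extra function-value inequality in the second step, and because $\langle c,x_k\rangle$ is monotone; together these let the divergence of $\sum_k\eta_k$ alone drive the objective to optimality.  The rest is routine infinite-dimensional bookkeeping: weak sequential compactness of the bounded iterates, weak closedness of $S$, weak continuity of $\langle c,\cdot\rangle$, and Opial's lemma.
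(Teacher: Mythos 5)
Your proof is correct, and while it starts from the same observation as the paper---that the linear-objective PGA iteration $x_{k+1}=P_S(x_k+\eta_k c)$ is the resolvent iteration $x_{k+1}=J_{\eta_k A}(x_k)$ for the maximal monotone operator $A=N_S-c$ with $\mathrm{zer}\,A=\cM(c)$---it then diverges: the paper simply cites the abstract weak-convergence theorem for the proximal point algorithm (Theorem~23.41 of Bauschke--Combettes), whereas you give a self-contained argument via Fej\'er monotonicity with respect to $\cM(c)$, a telescoping bound $\sum_k\eta_k\bigl(\langle c,x^*\rangle-\langle c,x_{k+1}\rangle\bigr)\le\tfrac12\|x_0-x^*\|^2$, monotonicity of $\langle c,x_k\rangle$, and Opial's lemma. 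Each step checks out (the three inner-product inequalities all follow from testing $x_k+\eta_k c-x_{k+1}\in N_S(x_{k+1})$ against $x^*$ and against $x_k$, and the nonnegative nonincreasing summands together with $\sum_k\eta_k=\infty$ do force the objective gap to zero). Your closing remark identifies a real issue: the cited black-box theorem is stated under $\sum_k\eta_k^2=\infty$, which is \emph{not} implied by the theorem's hypothesis $\sum_k\eta_k=\infty$ (take $\eta_k=1/(k+1)$), so the paper's citation does not quite cover the stated generality. Your argument, which is essentially the Brezis--Lions refinement for subdifferential operators ($A=\partial(\iota_S-\langle c,\cdot\rangle)$), closes that gap and proves the theorem exactly as stated. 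The trade-off is length: the paper's route is a two-line reduction to a known result, while yours is longer but elementary and hypothesis-tight.
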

\begin{proof}
Consider the operator \[A = N_S - c,\] which is maximal monotone
because both \(N_S\) and the constant operator are maximal monotone, and the interior of the domain of the constant operator is the entire Hilbert space
(see, e.g.,~\cite{ryu-boyd-primer}).

Let \(J_{\eta_kA}\) be the resolvent of \(\eta_k A\) where \(\eta_k > 0\).
Then, we observe that \[x_{k+1} = P_S(x_k + \eta_k c) = J_{\eta_k A}(x_k)\] through the following sequence of equivalences:
\begin{align*}
    y = J_{\eta_kA}(x) &\iff y = (\mathrm{Id} + \eta_k(N_S - c))^{-1}(x)\\
    &\iff y = (\mathrm{Id} + N_S - \eta_k c)^{-1}(x)\\
    &\iff x \in y + N_S(y) -\eta_k c\\
    &\iff x + \eta_k c -y\in N_S(y)\\
    &\iff y = P_S(x + \eta_k c). \tag*{(by Lemma \ref{lem: proj normal cone})}
\end{align*}

By assumption, \(\cM(c)\neq\emptyset\).  We observe that the zeros
of \(A\) equals \(\cM(c)\), \[\mathrm{zer}\,A =
\{x\in\cH: 0\in Ax\} = \{x\in\cH: c\in N_S(x)\}= \cM(c).\] Since
\(A\) is a maximally monotone operator such that \(\mathrm{zer}\,
A\neq\emptyset\), and \(x_{k+1} = J_{\eta_k A}(x_k)\), we conclude
that the sequence converges weakly to a point in \(\mathrm{zer}\,A =
\cM(c)\) (see, e.g., Theorem 23.41 of \cite{bauschke-combettes}).
\end{proof}

\begin{rem}
We emphasize the difference between the results in Theorem~\ref{thm:x+eta c
  converges} and Theorem~\ref{thm:pga-linear-case}.  The first
theorem analyzes the behavior of a single step of projected gradient ascent 
\(P_S(x_0 + \eta c)\) as the step size \(\eta\) goes to infinity, proving
strong convergence to the unique solution in \(\cM(c)\) closest to \(x_0\). 
In contrast, the second theorem establishes weak
convergence of projected gradient ascent to some solution 
in \(\cM(c)\) after an infinite number of finite steps.
\end{rem}


\subsection{Convex Functions}

Now, we consider the case of maximizing a continuous convex function with PGA.

In Theorem~\ref{thm:pga-first-order-stationarity}, we establish
first-order stationarity of accumulation points under weak
assumptions. We consider the case of general Hilbert spaces and only require that the
step sizes do not vanish and remain bounded above. Neither differentiability nor
Lipschitz continuity of the gradient is required.

Previous results that apply to the setting of maximizing convex
functions require significantly stronger assumptions.  For
example, the convergence results in \cite{attouch-bolte-svaiter} 
apply in finite-dimensional spaces when \(f\) is differentiable with a
Lipschitz continuous gradient over a nonempty closed set \(S\), and
satisfies the Kurdyka-Łojasiewicz property.

\begin{prop} \label{prop:increasing value convex}
  Let \(f:\cH\to \rr\) be convex, \(S\subseteq\cH\) be a convex set
  and \(\{x_k\}_\nn\) be a sequence of iterates generated by PGA.  Then the
  sequence of values \(\{f(x_k)\}_{k\in\nn}\) is nondecreasing.
\end{prop}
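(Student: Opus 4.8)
## Proof Proposal

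The plan is to show that each PGA step does not decrease the value of $f$ by combining the convexity of $f$ with the defining optimality property of the projection $P_S$. Fix $k$ and write $x = x_k$, $g = g_k \in \partial f(x_k)$, $\eta = \eta_k$, and $x' = x_{k+1} = P_S(x + \eta g)$. The subgradient inequality for a convex function gives $f(x') \ge f(x) + \langle g, x' - x\rangle$, so it suffices to prove $\langle g, x' - x\rangle \ge 0$. If $\eta = 0$ then $x' = P_S(x) = x$ (since $x \in S$) and the claim is trivial, so assume $\eta > 0$.

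Next I would invoke the projection optimality condition. By Lemma~\ref{lem: proj normal cone}, since $x' = P_S(x + \eta g)$, we have $(x + \eta g) - x' \in N_S(x')$. By the definition of the normal cone, this means $\langle (x + \eta g) - x', x' - z\rangle \ge 0$ for all $z \in S$; in particular, taking $z = x \in S$ yields
\[
\langle (x + \eta g) - x',\, x' - x\rangle \ge 0.
\]
Rearranging, $\eta \langle g, x' - x\rangle \ge \langle x' - x, x' - x\rangle = \|x' - x\|^2 \ge 0$. Since $\eta > 0$, we conclude $\langle g, x' - x\rangle \ge 0$, and hence $f(x_{k+1}) = f(x') \ge f(x) = f(x_k)$. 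This holds for every $k$, so $\{f(x_k)\}$ is nondecreasing.

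I do not anticipate a genuine obstacle here: the argument is short, and the only subtlety is making sure the sign of the inner product comes out right when feeding $z = x_k$ into the normal cone inequality — the projection of $x_k + \eta_k g_k$ lands on the side of $S$ that $g_k$ points toward, which is exactly what makes the ascent direction nonnegative against $g_k$. One could alternatively phrase the whole thing in terms of the first-order optimality of $y \mapsto \tfrac12\|y - (x_k + \eta_k g_k)\|^2$ over $S$, as the paper does for the preceding two lemmas, but routing through Lemma~\ref{lem: proj normal cone} keeps it cleanest. Note also that this uses only convexity of $S$ and $f$ and no assumption on the step sizes, matching the stated generality of the proposition.
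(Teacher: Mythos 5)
Your proof is correct and follows essentially the same route as the paper: both reduce the claim, via the subgradient inequality, to showing \(\langle g_k, x_{k+1}-x_k\rangle \ge 0\). The only difference is that the paper justifies this inner-product inequality by citing the monotonicity result (Proposition~\ref{prop:grow} with \(\eta_1=0\), \(\eta_2=\eta_k\)), whereas you derive it directly from the projection's normal-cone condition, which in fact yields the slightly stronger bound \(\eta_k\langle g_k, x_{k+1}-x_k\rangle \ge \|x_{k+1}-x_k\|^2\) that the paper later re-derives as inequality~\eqref{eq:inequality1}.
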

\begin{proof}
Let \[f_k(x) = f(x_k) + \langle x-x_k, g_k\rangle.\] Since $f$ is convex, and $g_k$ is a subgradient of $f$ at $x_k$, we know \(f_k(x)\) lower-bounds \(f(x)\), and they touch at \(x_k\). In other words, \begin{enumerate}[(a)]
    \item \(f_k(x) \leq f(x)\) for all \(x\in\cH\), and
    \item \(f_k(x_k) = f(x_k)\).
\end{enumerate}

We first show that \begin{equation}\label{eq:increasing value for conv
    function} f_k(x_{k+1}) \geq f_k(x_k).
\end{equation}
Note that $x_{k+1} = P_S(x_k + \eta_k g_k)$ and $x_k = P_S(x_k)$.  By Proposition~\ref{prop:grow}
with $\eta_1=0$ and $\eta_2 = \eta_k$ we know that \(\langle g_k,
x_{k+1}\rangle \geq \langle g_k, x_k\rangle\).  This implies \(\langle
x_{k+1}-x_k, g_k\rangle \geq 0\) and now \eqref{eq:increasing value for conv function} follows from the definition of \(f_k\).

The nondecreasing property follows from (a), \eqref{eq:increasing value for conv function}, and (b),
\[f(x_{k+1})\geq f_k(x_{k+1})\geq f_k(x_k) = f(x_k).\]
\end{proof}

\begin{cor}\label{cor:bounded domain convex value converges}
If \(S\) is bounded, the sequence \(\{f(x_k)\}\) converges.
\end{cor}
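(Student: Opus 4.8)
The plan is to combine the monotonicity result of Proposition~\ref{prop:increasing value convex} with the boundedness of $f$ on $S$. The statement to prove is Corollary~\ref{cor:bounded domain convex value converges}: if $S$ is bounded, then the sequence $\{f(x_k)\}$ converges.

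First I would observe that by Proposition~\ref{prop:increasing value convex}, the sequence $\{f(x_k)\}_{k\in\nn}$ is nondecreasing. A bounded monotone sequence of real numbers converges, by the monotone convergence theorem, so it remains only to show that $\{f(x_k)\}$ is bounded above. Since every iterate $x_k$ lies in $S$, it suffices to show that $f$ is bounded above on $S$. For this I would invoke the hypothesis that $S$ is bounded together with the fact that $f$ is convex (and, as a real-valued convex function on all of $\cH$, is continuous hence locally bounded). A convex function attains its maximum over a closed bounded convex set in finite dimensions; in a general Hilbert space one needs a little care, but here boundedness of $S$ implies $S$ is contained in some closed ball $B$, and a finite-valued convex function $f:\cH\to\rr$ is bounded above on every bounded set --- this follows, for instance, from the fact that such an $f$ is continuous and therefore locally Lipschitz, so on any bounded set $f$ is bounded (cover it by finitely many balls on which $f$ is Lipschitz, or use that a lower semicontinuous convex function finite on an open neighborhood of a bounded set is bounded above on that set). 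Hence $M := \sup_{x\in S} f(x) < \infty$, so $f(x_k)\le M$ for all $k$.

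Putting these together: $\{f(x_k)\}$ is nondecreasing and bounded above by $M$, hence convergent. The only mildly delicate point is the claim that a finite-valued convex function on a Hilbert space is bounded above on bounded subsets; in the finite-dimensional setting this is immediate, and in the infinite-dimensional setting it is a standard consequence of the local boundedness of finite-valued convex functions (see, e.g.,~\cite{bauschke-combettes}). I would phrase the write-up to simply cite this fact rather than reprove it, since the intended reading is that $f$ is continuous (as is implicit throughout the section). The main obstacle, such as it is, is therefore not an obstacle at all for the finite-dimensional case and only a citation in general --- the substance of the corollary is entirely carried by the preceding proposition.
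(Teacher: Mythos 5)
Your proof is correct and takes essentially the same route as the paper: Proposition~\ref{prop:increasing value convex} gives monotonicity, boundedness of $f$ on $S$ gives an upper bound, and the monotone convergence theorem finishes. The only difference is your extra care about why $f$ is bounded above on a bounded set; be aware that your cited fact is actually false for general continuous convex functions on infinite-dimensional $\cH$ (there are continuous convex functions on $\ell_2$ unbounded on the unit ball), but the paper glosses over the same point, and it is harmless where the corollary is used since there $S$ is assumed compact and $f$ continuous.
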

\begin{proof}
By Proposition~\ref{prop:increasing value convex}, the sequence is
nondecreasing. Because \(S\) is bounded, the sequence is also
bounded. Finally, bounded nondecreasing sequences in \(\rr\) converge.
\end{proof}

\begin{obs}\label{rem:accumulation}
If \(S\) is compact then \(\{x_k\}\) has at least one accumulation point.
\end{obs}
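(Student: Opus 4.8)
The plan is to note that this is an immediate consequence of sequential compactness, once one records that the iterates never leave $S$. First I would observe that, by the definition of projected subgradient ascent, each iterate satisfies $x_{k+1} = P_S(x_k + \eta_k g_k) \in S$, and the starting point $x_0$ is assumed to lie in $S$; hence the entire sequence $\{x_k\}_{k\in\nn}$ is contained in $S$.

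Next I would invoke compactness of $S$. Since $\cH$ is a Hilbert space it is in particular a metric space, and in a metric space compactness coincides with sequential compactness. Therefore the sequence $\{x_k\}$, lying in the compact set $S$, admits a subsequence $\{x_{k_j}\}$ converging strongly to some $\bar x \in S$. By definition $\bar x$ is an accumulation point of $\{x_k\}$, which proves the claim.

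The only step that deserves a word of justification is the passage from compactness to sequential compactness; this is standard for metric spaces (they are first-countable, so compactness implies limit-point compactness implies sequential compactness), and one can simply cite this fact or the corresponding statement in a reference such as \cite{megginson}. I expect no genuine obstacle here: the observation is essentially a restatement of the definition of sequential compactness, with the single bookkeeping remark that the PGA iterates remain in $S$ for all $k$.
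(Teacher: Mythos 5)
Your proof is correct and matches the paper's intent: the paper states this as an Observation with no written proof precisely because it is the standard fact that a sequence in a compact subset of a metric space has a convergent subsequence, together with the bookkeeping remark that the PGA iterates remain in $S$. Nothing further is needed.
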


\begin{thm}\label{thm:pga-accumulation-connected}
Let \(f:\cH\to \rr\) be continuous and convex, \(S\subseteq\cH\) be a
compact and convex set, and \(\{x_k\}_\nn\) be a sequence generated by PGA
with step sizes \(\{\eta_k\}_\nn\) where
\(\limsup_{k\to\infty}\eta_k < \infty\). Then the set of accumulation
points of \(\{x_k\}_\nn\) is connected.
\end{thm}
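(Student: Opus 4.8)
The statement to prove is Theorem~\ref{thm:pga-accumulation-connected}: that the accumulation set of the PGA iterates is connected when $S$ is compact and convex and the step sizes are bounded above. The standard tool for this kind of result is the observation that consecutive iterates move a small amount relative to the displacement $x_{k+1}-x_k$, which must tend to zero. Specifically, I would first show that $\|x_{k+1}-x_k\|\to 0$. To get this, I would use Proposition~\ref{prop:increasing value convex} together with the boundedness of $f$ on the compact set $S$: since $\{f(x_k)\}$ is nondecreasing and bounded (Corollary~\ref{cor:bounded domain convex value converges}), it converges, so $f(x_{k+1})-f(x_k)\to 0$. Then I would quantify the gain at each step from below in terms of $\|x_{k+1}-x_k\|$. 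Chasing through the proof of Proposition~\ref{prop:increasing value convex}, the projection optimality condition gives $\langle (x_k+\eta_k g_k)-x_{k+1},\, x_k - x_{k+1}\rangle \le 0$, i.e. $\langle x_{k+1}-x_k, x_{k+1}-x_k\rangle \le \eta_k\langle g_k, x_{k+1}-x_k\rangle$, so $\|x_{k+1}-x_k\|^2 \le \eta_k\langle g_k, x_{k+1}-x_k\rangle$. Combined with $f(x_{k+1})-f(x_k)\ge \langle g_k, x_{k+1}-x_k\rangle$ (shown in that proposition's proof), we get $\|x_{k+1}-x_k\|^2 \le \eta_k\,(f(x_{k+1})-f(x_k))$. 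Since $\eta_k$ is bounded above (say by $M$ eventually, using $\limsup\eta_k<\infty$) and $f(x_{k+1})-f(x_k)\to 0$, this forces $\|x_{k+1}-x_k\|\to 0$.

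**From small steps to connectedness.** The second part is the general topological fact: if $\{x_k\}$ is a sequence in a compact metric space with $\mathrm{dist}(x_{k+1},x_k)\to 0$, then its accumulation set $\mathcal{A}$ is connected. I would prove this by contradiction. The set $\mathcal{A} = \bigcap_{N} \overline{\{x_k : k \ge N\}}$ is closed, hence compact. If $\mathcal{A}$ were disconnected, write $\mathcal{A} = A_1 \sqcup A_2$ with $A_1, A_2$ nonempty, closed, and disjoint; since $\mathcal{A}$ is compact, $\delta := \mathrm{dist}(A_1,A_2) > 0$. Let $U_i$ be the open $\delta/3$-neighborhood of $A_i$; these are disjoint. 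Every accumulation point lies in $U_1\cup U_2$, and since $S$ is compact, all but finitely many $x_k$ lie in $U_1\cup U_2$ (otherwise there would be a subsequence outside, which would have an accumulation point outside $U_1\cup U_2$, contradicting that $\mathcal{A}\subseteq U_1\cup U_2$). Since both $A_1$ and $A_2$ contain accumulation points, the iterates visit $U_1$ infinitely often and $U_2$ infinitely often. So there are infinitely many indices $k$ where $x_k\in U_1$ and $x_{k+1}\notin U_1$ (a "crossing"). For large $k$ we have $\|x_{k+1}-x_k\| < \delta/3$, so $x_{k+1}$ cannot be in $U_2$ (that would require a jump of more than $\delta/3$), yet $x_{k+1}$ is eventually in $U_1\cup U_2$ — contradiction. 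Hence $\mathcal{A}$ is connected.

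**Main obstacle.** The genuinely delicate part is the first step — extracting $\|x_{k+1}-x_k\|^2 \le \eta_k(f(x_{k+1})-f(x_k))$ cleanly from the projection optimality condition and the subgradient inequality, and making sure the direction of all inequalities is right (ascent vs.\ descent sign conventions). The topological argument is routine once $\|x_{k+1}-x_k\|\to 0$ is in hand, though care is needed that "all but finitely many iterates lie in $U_1\cup U_2$" is justified via compactness of $S$ rather than compactness of $\mathcal{A}$. Continuity of $f$ is used only implicitly (to ensure $f$ is bounded on $S$ and that the value sequence argument is valid); convexity of $S$ is not actually needed for the connectedness conclusion itself, only for the PGA iteration to be well defined and for Proposition~\ref{prop:increasing value convex}.
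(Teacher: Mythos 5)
Your proposal is correct and follows essentially the same route as the paper: the identical projection-optimality and subgradient inequalities yield \(\|x_{k+1}-x_k\|^2 \le \eta_k\bigl(f(x_{k+1})-f(x_k)\bigr)\), and convergence of \(\{f(x_k)\}\) plus the bound on the step sizes forces \(\|x_{k+1}-x_k\|\to 0\). The only difference is that the paper then simply cites a reference for the topological fact that a sequence in a compact set with vanishing consecutive displacements has a connected accumulation set, whereas you prove that fact directly (and correctly) by the separation/crossing argument.
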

\begin{proof}
  Let \(y_k = x_k + \eta_k g_k,v_k = x_{k+1} - x_k\), and \(\Delta_k =
  f(x_{k+1}) - f(x_k)\). Then \(x_{k+1} = P_S(y_k)\).

We show that \(\|v_k\|\to 0\) as \(k\to\infty\).

By the optimality conditions of projections, \[\langle \eta_k g_k-v_k,
-v_k\rangle = \langle y_k-x_{k+1}, x_k - x_{k+1}\rangle \leq 0,\]

Therefore,
\begin{equation}\label{eq:inequality1} \eta_k\langle
  g_k, v_k\rangle \geq \|v_k\|^2.
\end{equation} 

Next, by the convexity of
\(f\),
\begin{equation}\label{eq:inequality2} f(x_{k+1}) \geq f(x_k) +
  \langle g_k, v_k\rangle.
\end{equation}

Combining inequalities \eqref{eq:inequality1} and
\eqref{eq:inequality2}, we obtain \[\eta_k\Delta_k \geq \|v_k\|^2.\]
Let \(\bar\eta = \lim\sup_{k\to\infty}\eta_k\).  There
exists some \(K\in\nn\) such that for any \(k\geq K\), \(\bar\eta + 1
> \eta_k\). Hence, for \(k\geq K\), \[(\bar\eta+1)\Delta_k >
\eta_k\Delta_k \geq \|v_k\|^2.\] By Corollary~\ref{cor:bounded domain
  convex value converges}, we know 
\((\bar\eta+1)\Delta_k\to0\), implying that \[\|x_{k+1}-x_k\|^2 =
\|v_k\|^2\to 0.\] Therefore, the set of accumulation points is
connected (see, e.g.,~\cite{asic-adamovic}).
\end{proof}

\begin{defin}[First-Order Stationarity]
A point \(x\in S\) is a \emph{first-order stationary point} for the
maximization of \(f\) over \(S\) if there exists a subgradient
\(g\in\partial f(x)\) such that \(\langle g, z-x\rangle \leq 0\) for
all \(z\in S\). Equivalently, if there exists \(g\in\partial f(x)\)
such that \(g\in N_S(x)\).
\end{defin}

\begin{thm}\label{thm:pga-first-order-stationarity}
Let \(f:\cH\to \rr\) be continuous and convex, \(S\subseteq\cH\) be a
compact and convex set, and \(\{x_k\}_\nn\) be a sequence 
generated by PGA with step sizes \(\{\eta_k\}_\nn\) where
\(\limsup_{k\to\infty}\eta_k < \infty\) and
\(\liminf_{k\to\infty}\eta_k > 0\).  Then every accumulation point of
\(\{x_k\}_\nn\) is a first-order stationary point of \(f\).
\end{thm}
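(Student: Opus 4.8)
The plan is to take an arbitrary accumulation point $\bar x$ of $\{x_k\}$, say $x_{k_j}\to\bar x$, and produce a subgradient $g\in\partial f(\bar x)$ with $g\in N_S(\bar x)$. The natural candidate is a limit of the subgradients $g_{k_j}$ used along the convergent subsequence. First I would note that $\{x_k\}$ is bounded (it lives in the compact set $S$), and that by a standard local boundedness property of the subdifferential of a finite convex function on a Hilbert space, the subgradients $g_k$ are bounded — at least along a subsequence, so after passing to a further subsequence we may assume $g_{k_j}\rightharpoonup \bar g$ weakly (in finite dimensions this is just Bolzano–Weierstrass; in general Hilbert space one invokes weak compactness of bounded sets together with the demiclosedness of the graph of $\partial f$). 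That demiclosedness, i.e.\ that $x_{k_j}\to\bar x$, $g_{k_j}\rightharpoonup\bar g$, $g_{k_j}\in\partial f(x_{k_j})$ imply $\bar g\in\partial f(\bar x)$, is the standard maximal-monotonicity property of $\partial f$ and I would cite it.

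Next I would extract the stationarity condition from the projection step. From the proof of Theorem~\ref{thm:pga-accumulation-connected} we already know $\|v_k\| = \|x_{k+1}-x_k\|\to 0$. The optimality condition for the projection $x_{k+1}=P_S(x_k+\eta_k g_k)$ says, by Lemma~\ref{lem: proj normal cone}, that $(x_k+\eta_k g_k) - x_{k+1}\in N_S(x_{k+1})$, i.e.\ for every $z\in S$,
\[
\langle (x_k - x_{k+1}) + \eta_k g_k,\; z - x_{k+1}\rangle \le 0.
\]
Dividing by $\eta_k$ and using $\liminf \eta_k > 0$ (so $1/\eta_k$ stays bounded) together with $\|x_k-x_{k+1}\|\to 0$, the first term is $O(\|v_k\|/\eta_k)\to 0$, so along the subsequence $k=k_j$ we get in the limit $\langle \bar g,\, z - \bar x\rangle \le 0$ for all $z\in S$. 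Here one must be mildly careful: $x_{k_j+1}\to\bar x$ as well since $\|x_{k_j+1}-x_{k_j}\|\to 0$, and the pairing $\langle g_{k_j}, z - x_{k_j+1}\rangle$ converges to $\langle \bar g, z-\bar x\rangle$ because $g_{k_j}\rightharpoonup\bar g$ and $z - x_{k_j+1}\to z-\bar x$ strongly (weak–strong pairing passes to the limit). That gives exactly $\bar g\in N_S(\bar x)$ with $\bar g\in\partial f(\bar x)$, which is first-order stationarity.

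The main obstacle is the subgradient bookkeeping in the infinite-dimensional setting: ensuring the $g_{k}$ are bounded so that a weakly convergent subsequence exists, and that the weak limit is still a subgradient at $\bar x$. In finite dimensions both facts are elementary (local boundedness of $\partial f$ near the compact set $S$, and closedness of the graph of $\partial f$), and in general Hilbert space they follow from the fact that $\partial f$ of a continuous convex function is maximal monotone and locally bounded on the interior of its domain (which is all of $\cH$ here); I would state these as cited facts rather than reprove them. Everything else — the decay $\|v_k\|\to 0$, the division by $\eta_k$, and the limit in the projection inequality — is routine given the hypotheses $\liminf\eta_k>0$ and $\limsup\eta_k<\infty$ and the already-established convergence of $\{f(x_k)\}$.
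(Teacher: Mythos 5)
Your proposal is correct and follows essentially the same route as the paper: establish $\|x_{k+1}-x_k\|\to 0$ from Theorem~\ref{thm:pga-accumulation-connected}, extract a weakly convergent subsequence of the (locally bounded) subgradients, use demiclosedness/maximal monotonicity of $\partial f$ to get $\bar g\in\partial f(\bar x)$, and pass to the limit in the projection optimality condition after dividing by $\eta_k$. The only cosmetic difference is that the paper concludes $\bar g\in N_S(\bar x)$ by invoking maximal monotonicity of the normal cone operator, whereas you obtain it directly via the weak--strong pairing in the variational inequality $\langle (x_k-x_{k+1})+\eta_k g_k,\,z-x_{k+1}\rangle\le 0$; both are valid.
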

\begin{proof}
Choose any accumulation point \(\tilde x\) of \(\{x_k\}\).  We claim that
$\tilde x$ is a first-order stationary point of \(f\). It suffices to
show there exists some subgradient \(g\in\partial f(\tilde x)\)
such that \(g\in N_S(\tilde x)\).

Let \(\underline \eta=\liminf_{k\to\infty}\eta_k>0\).
Then there exists $K\in \nn$ such that for any $k\geq K$, $\underline{\eta}/2 < \eta_k$.

Let \(v_k = x_{k+1} - x_k\).  The proof of
Theorem~\ref{thm:pga-accumulation-connected} showed \(\|v_k\|\to
0\).  Therefore,
\[\lim_{k\to\infty}\frac{\|v_k\|}{\eta_k} \le \lim_{k\to\infty}\frac{2\|v_k\|}{\underline{\eta}} = 0.\] 
Since \(\frac{v_k}{\eta_k}\) converges to 0 in norm,
\begin{equation}\label{eq:vk/nk->0} \lim_{k\to\infty}
  \frac{v_k}{\eta_k} = 0.
\end{equation}

Because \(\tilde x\) is an accumulation point, there exists a
subsequence \(\{x_{k_j}\}\) such that \(x_{k_j}\to \tilde
x\). Additionally, \(\|v_{k_j}\|\to 0\) implies that \(x_{k_j +
  1}\to\tilde x\).

Because \(f:\cH\to\rr\) is convex and continuous, for any \(x\in
\cH\), there exists \(r_x > 0\) and \(L_x\) such that
\(\partial f(B(x, r_x)) \subset B(0, L_x)\) (see, e.g., Proposition
16.17 of~\cite{bauschke-combettes}). The balls form an open cover of
\(S\). Compactness of \(S\) yields a finite subcover, with which we
may find some \(L\) such that \(\{g_{k_j}\}\subseteq
\bigcup_{x\in S}\partial f(x) \subseteq \bar B(0, L).\) Because \(\bar
B(0, L)\) is weakly compact, there exists a further subsequence
\(\{g_{k_{j_\ell}}\}\) such that \(g_{k_{j_\ell}} \rightharpoonup g\)
for some \(g\). Combining with~\eqref{eq:vk/nk->0},
\(g_{k_{j_\ell}}-{v_{k_{j_\ell}}}/{\eta_{k_{j_\ell}}}\rightharpoonup
g\).

Since \(x_{k_{j_\ell}}\to \tilde x, g_{k_{j_\ell}}\rightharpoonup g\),
\(g_{k_{j_\ell}} \in \partial f(x_{k_{j_\ell}})\), and the
subdifferential operator is maximally monotone, we have \(g\in \partial
f(\tilde x)\) (see, e.g., Proposition 20.37
of~\cite{bauschke-combettes}).

Since \(x_{k_{j_\ell}+1}\to \tilde x,
g_{k_{j_\ell}}-{v_{k_{j_\ell}}}/{\eta_{k_{j_\ell}}}\rightharpoonup
g\), \(g_{k_{j_\ell}} - {v_{k_{j_\ell}}}/{\eta_{k_{j_\ell}}}\in
N_S(x_{k_{j_\ell}+1})\) by Lemma~\ref{lem: proj normal cone}, and the
normal cone operator is maximally monotone, we have, \(g\in
N_S(\tilde x)\).

Since \(g\in\partial f(\tilde x)\) and \(g \in N_S(\tilde x)\) we
conclude \(\tilde x\) is a first-order stationary point of \(f\).
\end{proof}

\subsection{Conditional Gradient and Iterated Linear Optimization}
\label{sec:CG-PGA-ILO}

Now we consider the limiting case of PGA when all of the step sizes go
to infinity and relate this limit to the conditional
gradient method and iterated linear optimization.

Consider the limit of the $k$-th PGA iteration as $\eta_k \to \infty$,
\[x_{k+1} = \lim_{\eta_k\to\infty} P_S(x_k + \eta_k g_k).\]

When $f$ is differentiable $g_k = \nabla f(x_k)$ and by
Theorem~\ref{thm:x+eta c converges},
\begin{equation}
  x_{k+1} = P_{\cM(\nabla f(x_k))}(x_k). 
  \label{eq:PGAlimit}
\end{equation}
That is, $x_{k+1}$ is the maximizer of $\langle \nabla f(x_k), x \rangle$ closest to $x_k$.

This limiting behavior of PGA with infinity step size is closely
related to the conditional gradient method, also known as the
Frank-Wolfe algorithm~\cite{frank-wolfe}.  This parallels the convex
minimization setting, where the limit of a PGD step in a polytope is
known to recover a solution of the corresponding linear minimization
problem~\cite{mortagy-gupta-pokutta}.

\begin{defin}(Conditional Gradient/Frank-Wolfe)
Let \(f:\cH\to\rr\) be convex and differentiable, \(S\subseteq \cH\)
be nonempty, closed, and convex, and
\(\{\eta_k\}_{k\in\nn}\subseteq[0,1]\) be a sequence of step sizes,
The CG algorithm generates a sequence of iterates,
 \[x_{k+1} = x_k + \eta_k(z_k-x_k),\] where \[z_k\in\argmax_{z\in
  S}\, \langle \nabla f(x_k), z\rangle.\]
\end{defin}

The unit-step variant of conditional gradient (CGU) sets \(\eta_k=
1\).  This yields
\[x_{k+1} \in \argmax_{z\in S}\,\langle \nabla f(x_k), z\rangle = \cM(\nabla f(x_k)).\]
Now we can see that PGA with infinite step sizes, as defined by
Equation~\eqref{eq:PGAlimit}, is a deterministic variant of CGU, where
in each iteration we select the particular element of $\cM(\nabla
f(x_k))$ that is closest in norm to the last iterate.  When
$|\cM(\nabla f(x_k))| = 1$, such as when $S$ is smooth, the methods
coincide.

Finally, we note that when \(f(x) = \tfrac{1}{2}\|x\|^2\) the CGU
iteration leads to,
\[x_{k+1}\in\argmax_{z\in S}\,\langle x_k, z\rangle,\]
which is exactly the update rule defined by the iterated linear
optimization paradigm described in \cite{felzenszwalb}. That is,
iterated linear optimization is equivalent to CGU with a particular
choice for $f$, and PGA with infinite step size defines a
deterministic variant of both methods.

\section*{Acknowledgements}

This research was partially funded by the Brown University Advanced Undergraduate Research Fellowship under the SPRINT/UTRA program.

\bibliographystyle{abbrv}
\bibliography{refs}

\vspace{1cm}

\end{document}